\title[Countable equivariant cellular approximation]{Countable approximation of topological $G$-manifolds: compact Lie groups $G$}
\author[Q Khan]{Qayum Khan}
\address{Department of Mathematics \hfill Saint Louis University \hfill St Louis MO 63103 USA}
\email{khanq@slu.edu}
\definecolor{dark-red}{rgb}{0.4,0.15,0.15}
\definecolor{dark-blue}{rgb}{0.15,0.15,0.4}
\definecolor{medium-blue}{rgb}{0,0,0.5}
\newtheorem{thm}{Theorem}[section]
\newtheorem{conj}[thm]{Conjecture}
\newtheorem{cor}[thm]{Corollary}
\newtheorem{lem}[thm]{Lemma}
\theoremstyle{definition}
\newtheorem{defn}[thm]{Definition}
\newtheorem{rem}[thm]{Remark}
\newtheorem{exm}[thm]{Example}
\numberwithin{equation}{section}
\DeclareMathAlphabet{\matheurm}{U}{eur}{m}{n}
\newcommand{\C}{\mathbb{C}}
\newcommand{\F}{\mathbb{F}}
\newcommand{\N}{\mathbb{N}}
\newcommand{\R}{\mathbb{R}}
\newcommand{\Z}{\mathbb{Z}}
\newcommand{\G}{\Gamma}
\newcommand{\cF}{\mathcal{F}}
\newcommand{\cH}{\mathcal{H}}
\newcommand{\fin}{\matheurm{fin}}
\newcommand{\Homeo}{\mathrm{Homeo}}
\newcommand{\id}{\mathrm{id}}
\newcommand{\Torus}{\mathrm{Torus}}
\newcommand{\bdry}{\partial}
\newcommand{\iso}{\cong}
\newcommand{\longra}{\longrightarrow}
\DeclareMathOperator*{\colim}{colim}
\DeclareMathOperator*{\hocolim}{hocolim}
\newcommand{\ol}[1]{\overline{#1}}
\begin{document}

\begin{abstract}
Let $G$ be a compact Lie group.
(Compact) topological $G$-manifolds have the $G$-homotopy type of (finite-dimensional) countable $G$-CW complexes. 
This partly generalizes Elfving's theorem for locally linear $G$-manifolds \cite{Elfving1}.
\end{abstract}
\maketitle

\section{Topological manifolds}

By \emph{topological manifold}, we shall mean a separable metrizable space $M$ such that each point $x \in M$ has a neighborhood homeomorphic to some $\R^n$.
Notice we allow $n$ to depend on $x$, so that different components of $M$ may have different dimensions.
Note separable and metrizable imply paracompact \cite[4]{Dieudonne} and second-countable.

\begin{lem}\label{lem:top_exhaust}
Let $M$ be a topological manifold.
There exists an increasing sequence $M_0 \subseteq M_1 \subseteq \ldots \subseteq M_i \subseteq \ldots$ of open sets whose union is $M$ such that each closure $
\ol{M_i}$ in $M$ both is compact and is a neighborhood deformation retract in $M_{i+1}$.
\end{lem}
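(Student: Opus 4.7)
My plan is to use the $\sigma$-compactness of $M$ combined with ANR theory. Because $M$ is locally Euclidean and separable metrizable, it is locally compact Hausdorff and second countable, hence $\sigma$-compact; a standard argument using a countable precompact basis yields a nested compact exhaustion $K_0 \subseteq \mathrm{int}\,K_1 \subseteq K_1 \subseteq \mathrm{int}\,K_2 \subseteq \cdots$ with $\bigcup_i K_i = M$.

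Next, I would construct each $M_i$ as a finite union of open coordinate balls $W_{i,1}, \ldots, W_{i,k_i}$, chosen so that each closure $\ol{W_{i,j}}$ in $M$ is homeomorphic (via a chart) to a closed Euclidean ball, the union $M_i$ covers $K_i \cup \ol{M_{i-1}}$, and every $\ol{W_{i,j}}$ sits inside $\mathrm{int}\,K_{i+1}$. Such refined covers exist by local Euclideanness and compactness of the target sets. Then $M_i$ is open, $\ol{M_i}$ is a finite union of closed topological balls hence compact, and $\ol{M_i} \subseteq M_{i+1}$ by construction; the union $\bigcup_i M_i$ contains every $K_i$, so it equals $M$.

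The main obstacle is verifying the NDR property of $\ol{M_i}$ in $M_{i+1}$, and my strategy is to invoke ANR theory. By Hanner's theorem, a metric space that is locally ANR is itself an ANR, so $M$ and its open subsets are separable metric ANRs. By choosing the closed balls constituting $\ol{M_i}$ in sufficiently general position within the ambient charts and then iteratively applying the Borsuk--Hanner gluing lemma (if $A$, $B$, and $A \cap B$ are closed ANRs in a metric space, so is $A \cup B$), one arranges $\ol{M_i}$ to be a compact ANR. A closed ANR subspace of a metric ANR is a neighborhood retract by Hu's theorem, and local contractibility of $M_{i+1}$ promotes this retraction to a deformation retraction, establishing the NDR property. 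The delicate step I expect to have to massage is the general-position arrangement, since transversality in purely topological manifolds is subtle; if it becomes a sticking point, I would fall back on constructing a handle-like thickening of each finite union using local collars within the charts, bypassing the gluing lemma in favor of a direct homotopy.
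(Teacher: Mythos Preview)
Your outline is the natural elementary attack, and in fact the author explicitly laments not having found such a proof. The paper's argument is completely different: it reduces to the connected case, then in dimensions $\leqslant 4$ imports a smooth structure (via Rad\'o, Mo\"{i}se, Freedman--Quinn) and uses a proper Morse function, while in dimensions $\geqslant 5$ it invokes the Kirby--Siebenmann/Quinn topological handlebody theorem. In both regimes the sublevel sets $\ol{M_i}$ come equipped with \emph{bicollared} frontiers, so the NDR property is immediate. No ANR gluing is needed.

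The genuine gap in your plan is exactly the step you flag as ``delicate.'' The Borsuk--Hanner lemma needs $A\cap B$ to be an ANR, but intersections of closed coordinate balls in a purely topological manifold can be arbitrarily wild: there is no transversality or general-position theory in $\mathrm{TOP}$ that lets you perturb two topological balls so that their intersection is even locally contractible, let alone an ANR. Your fallback (``handle-like thickening using local collars within charts'') runs into the same obstruction: producing a collar on $\partial\ol{M_i}$ presupposes that this frontier is a bicollared submanifold, which is precisely what the paper obtains only by importing smooth or handlebody structure. Without that, the closure of a finite union of open balls need not be an ANR at all, and the step ``closed ANR in an ANR is an NDR'' never gets off the ground. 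So as written the proposal does not close, and the paper's recourse to deep $\mathrm{TOP}$ structure theorems appears to be essential rather than merely convenient.
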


It is a lament of the author to not have discovered a proof from basic topology.

\begin{proof}
If $M$ is compact, then simply use the constant sequence $M_i=M$.
Otherwise, assume $M$ is noncompact.
Since $M$ has only countably many components, say $\{M^c\}_{c=0}^\infty$ and by forming the finite unions $M_i := \bigcup_{c+j=i} M^c_j$ when such sequences $\{M^c_j\}_{j=0}^\infty$ exist, we may further assume that $M$ is connected, say with $n:=\dim\,M$.

First suppose $n \leqslant 4$.
If $n=1$, then $M$ is homeomorphic to either $S^1$ or $\R$ \cite{Gale} hence admits a ($C^\infty$) smooth structure.
If $n=2$, then $M$ admits a triangulation by Rad\'o \cite[2]{Rado} hence a smooth structure by Richards \cite[3]{Richards}\footnote{One can avoid this later classification of Richards and instead use elementary (1920s) methods: the regular neighborhoods of simplices in the second barycentric subdivision are PL handles (see \cite[6.9]{RS} for all $n$); each attaching map of a 1-handle or a 2-handle is isotopic to a smooth one.}.
If $n=3$, then $M$ admits a triangulation by Mo\"{i}se \cite[3]{Moise} hence a smooth structure by Cairns \cite[III]{Cairns}.
If $n=4$, then $M$ admits a smooth structure by Freedman--Quinn \cite[8.2]{FQ}, since $M$ is noncompact connected.
In any case, we may select a smooth structure on $M$.
There exists a Morse function $f: M \longra [0,\infty)$ with each $i \in \N$ a regular value \cite[2.3]{Milnor_Morse} and each $M_i := f^{-1}[0,i)$ precompact \cite[6.7]{Milnor_Morse}.
Furthermore, each $\ol{M_i}$ is a neighborhood deformation retract in $M_{i+1}$ \cite[3.1]{Milnor_Morse}.

Now suppose $n>4$.
The topological manifold $M$ admits a \emph{topological handlebody structure} $\ol{M_0} \subset \ldots \subset \ol{M_i} \subset \ldots$, by Kirby--Siebenmann \cite[III:2.1]{KS} if $n>5$ and by Quinn \cite[9.1]{FQ} if $n=5$.
That is, each $M_i$ is open in $M$, the closure $\ol{M_i}$ is compact, the frontier $\bdry M_i := \ol{M_i}-M_i$ is a bicollared topological submanifold of $M$ (that is, $\ol{M_i}$ is \emph{clean in $M$}), and $\ol{M_{i+1}}$ is the union of $\ol{M_i}$ and a handle.
So, since $\bdry M_i$ is bicollared, each $\ol{M_i}$ is a neighborhood deformation retract in $M_{i+1}$.
\end{proof}

An \emph{absolute neighborhood retract} (with respect to the class of metric spaces, denoted \emph{ANR}) is a metrizable space such that any closed embedding into a metric space admits a retraction of a neighborhood to the embedded image \cite[III:\S6]{Hu}.

A 1951 theorem of O~Hanner has this nonseparable generalization \cite[III:8.3]{Hu}.

\begin{lem}[Hanner]\label{lem:top_Hanner}
Topological manifolds are absolute neighborhood retracts.\end{lem}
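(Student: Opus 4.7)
The plan is to invoke Hanner's local-to-global theorem, which is exactly the content of \cite[III:8.3]{Hu} referenced in the statement. That theorem asserts that a metrizable space which is locally an ANR is itself an ANR. So the task reduces to verifying that a topological manifold is locally ANR in the metric sense, and noting that it is metrizable.

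First I would observe that $M$ is metrizable by hypothesis (built into our definition of topological manifold). Then, given $x \in M$, I would pick a neighborhood $U_x \subseteq M$ and a homeomorphism $U_x \cong \R^n$ for the appropriate $n$ depending on $x$. Since $\R^n$ is a convex subset of a normed vector space, it is an absolute retract (any closed embedding into a metric space extends over a neighborhood by the Dugundji extension theorem, or more directly by Tietze applied coordinatewise), and in particular an ANR. Transporting across the homeomorphism, each $U_x$ is an ANR.

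At this point the hypothesis of Hanner's theorem is verified: $M$ is a metrizable space with an open cover by ANRs. Applying \cite[III:8.3]{Hu} yields that $M$ is an ANR. The only subtle point is a bookkeeping one, namely that the local ANR property in our sense (open neighborhood homeomorphic to $\R^n$) matches the hypothesis of Hanner's theorem (open neighborhood which is an ANR under its subspace metric); this matches because the ANR property is a topological invariant of metrizable spaces, not depending on a choice of metric. There is no real obstacle; the work has already been done in Hu's textbook, and this lemma is essentially a labeling of the deduction.
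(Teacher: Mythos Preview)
Your proposal is correct and matches the paper's approach: the paper simply cites \cite[III:8.3]{Hu} (Hanner's local-to-global theorem) without further argument, and you have spelled out exactly the verification of its hypotheses---metrizability is built into the definition, and each point has an open neighborhood homeomorphic to $\R^n$, which is an AR hence an ANR.
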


\section{Equivariant topological manifolds}

A \emph{$G$-cofibration} is a $G$-map with the $G$-homotopy extension property to any $G$-space.
Some $G$-homotopy commutative squares can be made strictly commutative.

\begin{lem}\label{lem:strict}
Let $G$ be a topological group.
Let $A,B,C,D$ be (topological) $G$-spaces.
Let $i: A \longra B$ be a $G$-cofibration.
Let $f: A \longra C$ and $g: B \longra D$ and $h: C \longra D$ be (continuous) $G$-maps.
Suppose that $g\circ i$ is $G$-homotopic to $h \circ f$.
Then $g$ is equivariantly homotopic to a $G$-map $g': B \longra D$ such that $g' \circ i = h \circ f$.
\end{lem}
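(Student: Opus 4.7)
The plan is a direct application of the $G$-homotopy extension property (HEP) enjoyed by the $G$-cofibration $i: A \longra B$, as stipulated in the definition given just before the lemma statement. By hypothesis, there is a $G$-homotopy $H: A \times [0,1] \longra D$ with $H_0 = g \circ i$ and $H_1 = h \circ f$. Together with the $G$-map $g: B \longra D$, which satisfies $g \circ i = H_0$, this furnishes exactly the data to which the $G$-HEP along $i$ applies (with target the $G$-space $D$).

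The $G$-HEP then produces a $G$-homotopy $\wt{H}: B \times [0,1] \longra D$ satisfying $\wt{H}_0 = g$ and $\wt{H} \circ (i \times \id_{[0,1]}) = H$. Setting $g' := \wt{H}_1: B \longra D$, we see at once that $\wt{H}$ is a $G$-homotopy from $g$ to $g'$, while for each $a \in A$ one has $g'(i(a)) = \wt{H}_1(i(a)) = H_1(a) = h(f(a))$, so $g' \circ i = h \circ f$, as required.

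The only subtle point, and the closest thing to an obstacle, is the \emph{orientation} of $H$: one must start the homotopy at $g \circ i$ (rather than at $h \circ f$), so that the given map $g$ on $B$ is compatible with the initial condition required by the HEP. Since ``$G$-homotopic'' is a symmetric relation, this orientation is free to choose. Beyond that, the argument is entirely formal and uses nothing beyond the stated definition of a $G$-cofibration.
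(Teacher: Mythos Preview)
Your argument is correct and is essentially identical to the paper's own proof: both take the $G$-homotopy $H$ from $g\circ i$ to $h\circ f$, apply the $G$-homotopy extension property of $i$ with initial map $g$, and set $g'$ to be the time-$1$ map of the extended homotopy. Your remark on the orientation of $H$ is a helpful clarification but does not alter the approach.
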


\begin{proof}
Write $H: A \times [0,1] \longra D$ for the $G$-homotopy from $g \circ i$ to $h \circ f$.
Since $i: A \longra B$ is a $G$-cofibration, there exists a $G$-homotopy $E: B \times [0,1] \longra D$ from $g$ to some $g'$ such that $E \circ (i \times \id_{[0,1]}) = H$.
Then $g' \circ i = H|(A \times \{1\}) = h \circ f$.
\end{proof}

Recall $G$-cofibrant is equivalent to $G$-NDR for closed $G$-subsets \cite[VII:1.5]{Bredon_GT}.

\begin{defn}\label{def:top}
By a \emph{topological $G$-manifold}, we mean a topological $G$-space $M$ so, for each closed subgroup $H$ of $G$, the \emph{$H$-fixed subspace} is a topological manifold:
\[
M^H ~:=~ \{x \in M \;|\; \forall g \in H : gx=x \}.
\]
\end{defn}

Indeed the restriction of $M^H$ being locally euclidean is not automatic \cite[8]{Bing2}.

\begin{exm}[Bing]\label{exm:Bing}
A $C_2$-action on $\R^4$ exists with fixed set not a $C^0$-manifold.
\end{exm}

\begin{rem}
Let $M$ be a topological manifold with the action of a Lie group $G$.
For any prime $p$, the fixed set $M^H$ is an $\F_p$-cohomology manifold if $H \leqslant G$ is a finite $p$-group (Smith \cite[V:2.2]{Borel_seminar}) or toral group (Connor--Floyd \cite[V:3.2]{Borel_seminar}).
\end{rem}

T~Matumoto introduced the notion of a $G$-CW complex for any $G$ \cite[1.5]{Matumoto_GCW}.

\begin{thm}\label{thm:equi_Lie}
Let $G$ be a compact Lie group.
Any topological $G$-manifold is $G$-homotopy equivalent to a $G$-equivariant countable CW complex.
Furthermore, if the manifold is compact, then the CW complex can be selected to be finite-dimensional.
\end{thm}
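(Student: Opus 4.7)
The plan is to refine Lemma \ref{lem:top_exhaust} to the $G$-equivariant setting and then inductively build a countable $G$-CW approximation along this exhaustion, gluing stages together via Lemma \ref{lem:strict}.

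First I would produce an equivariant exhaustion: an increasing sequence $U_0 \subseteq U_1 \subseteq \cdots$ of $G$-invariant open subsets of $M$ whose union is $M$, with each $\overline{U_i}$ compact and a closed $G$-NDR in $U_{i+1}$. Since $G$ is compact Lie, the orbit projection $\pi : M \longra M/G$ is closed and proper onto a separable metrizable space; I would select an exhaustion downstairs, pull it back, and then average the resulting deformation retractions under Haar measure to render them $G$-equivariant. Bredon's identification \cite[VII:1.5]{Bredon_GT} of closed $G$-cofibrations with $G$-NDRs converts this into the cofibration data needed at the next step. In the compact case, one takes $U_0 = M$ so that the sequence is trivial.

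Next, I would show that each compact $G$-invariant piece $\overline{U_i}$ has the $G$-homotopy type of a finite $G$-CW complex. Each fixed subspace $\overline{U_i}^H = \overline{U_i}\cap M^H$ is a compact ANR by Lemma \ref{lem:top_Hanner}. Combined with finiteness of orbit types on compact pieces (classical for locally linear actions and extending to the topological setting under the fixed-set-manifold hypothesis of Definition \ref{def:top}), Jaworowski's criterion identifies $\overline{U_i}$ as a compact $G$-ANR, whence an equivariant Milnor-type theorem yields a finite $G$-CW approximation.

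Finally, I would assemble by induction on $i$. Given a finite $G$-CW complex $X_i$ and a $G$-homotopy equivalence $f_i : X_i \longra \overline{U_i}$, I would use Step 1 (the $G$-cofibration $\overline{U_i} \hookra U_{i+1}$) together with Step 2 applied to $\overline{U_{i+1}}$ to extend $X_i$ by finitely many $G$-cells to $X_{i+1} \supseteq X_i$, invoking Lemma \ref{lem:strict} at each stage to convert $G$-homotopy commutative squares into strictly commutative ones. The colimit $X := \colim X_i$ is then a countable $G$-CW complex equipped with a natural $G$-map to $M$; because each inclusion $X_i \hookra X_{i+1}$ and each $\overline{U_i} \hookra \overline{U_{i+1}}$ is a $G$-cofibration, a telescope argument upgrades the pointwise $G$-homotopy equivalences into a colimit $G$-homotopy equivalence $X \longra M$. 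The finite-dimensional conclusion in the compact case is immediate from Step 2 alone.

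The main obstacle I expect is Step 2: justifying that a compact $G$-invariant piece of a topological $G$-manifold has the $G$-homotopy type of a finite $G$-CW complex, particularly the finiteness-of-orbit-types input and the passage from ANR fixed sets to $G$-ANR and then to $G$-CW homotopy type. Example \ref{exm:Bing} already illustrates how subtle topological fixed sets can be outside the locally linear setting, so each link in that chain must be handled with care.
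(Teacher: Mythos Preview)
Your overall architecture---exhaust, approximate each stage, glue via Lemma~\ref{lem:strict} and a telescope---matches the paper's. The genuine gap is in Step~2. A compact $G$-ANR need \emph{not} have the $G$-homotopy type of a \emph{finite} $G$-CW complex: Quinn constructed a compact locally linear $G$-disc (for $G=C_{15}\rtimes_2 C_4$) with no finite $G$-CW complex in its $G$-homotopy type \cite[2.1.4]{Quinn_EndsII}, so the equivariant Borsuk/West theorem fails and your ``equivariant Milnor-type theorem yields a finite $G$-CW approximation'' is false as stated. Consequently ``extend $X_i$ by finitely many $G$-cells'' in Step~3 and the immediate finite-dimensional conclusion for compact $M$ are both unsupported. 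There is also a smaller slip: $\ol{U_i}^H=\ol{U_i}\cap M^H$ is the closure of an open set in the manifold $M^H$, not itself a manifold in general, so Lemma~\ref{lem:top_Hanner} does not show it is an ANR; the paper works with the \emph{open} pieces precisely to avoid this.

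The paper's substitute for your black box is a concrete construction producing only a \emph{finite-dimensional countable} complex. Jaworowski's criterion makes each open $M_i$ a $G$-ENR, hence a $G$-retract $r:U\to M_i$ of an open $G$-neighborhood $U$ in an orthogonal $G$-representation; by Illman \cite[7.2]{Illman_compactLie} the smooth $G$-manifold $U$ carries a finite-dimensional countable $G$-CW structure. Mather's mapping-torus trick then gives $M_i\times S^1\simeq_G\Torus(e\circ r)\simeq_G\Torus(c)$ for a cellular $G$-map $c:U\to U$, and passing to the infinite cyclic cover exhibits $M_i$ as $G$-homotopy equivalent to the bi-infinite mapping telescope of $c$, a finite-dimensional countable (not finite) $G$-CW complex $K_i$. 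The gluing is then a mapping telescope of cellular $g_i:K_i\to K_{i+1}$ rather than subcomplex inclusions. On Step~1: $M/G$ is not a manifold, so Lemma~\ref{lem:top_exhaust} does not apply downstairs, and Haar-averaging a deformation retraction into $M$ is not well-defined without convex structure; the paper instead applies Lemma~\ref{lem:top_exhaust} to $M$ itself and $G$-saturates each $M_i$.
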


\begin{proof}
Let $M$ be a topological $G$-manifold.
By Lemma~\ref{lem:top_Hanner}, each $M^H$ is an ANR.
Also, by the Bredon--Floyd theorem, each compact set in $M$ has only finitely many\footnote{An example with infinitely many orbit types is $M=\N \times S^1$ with $U_1$-action $u(n,z) := (n,u^n z)$.} conjugacy classes of isotropy group \cite[VII:2.2]{Borel_seminar}.
Consider the increasing sequence $\{M_i\}_{i=0}^\infty$ of Lemma~\ref{lem:top_exhaust}.
We may assume that each open set $M_i$ is $G$-invariant by replacement with its $G$-saturation $G M_i$; indeed, the compactness of $G$ implies the compactness of $\ol{M_i}$ \cite[I:3.1(3)]{Bredon_TG}.
Then each $\ol{M_i}$, hence $M_i$, has finitely many orbit types.
Also, the open set $M_i^H = M_i \cap M^H$ in $M^H$ is an ANR by Hanner \cite[III:7.9]{Hu}.
So, since $M_i$ is a separable metrizable finite-dimensional locally compact space, by a criterion of Jaworowski \cite[2.1]{Jaworowski1}\footnote{Beware the $G$-Wojdys{\l}awski theorem therein is incorrect and must be amended by \cite{Jaworowski3}.}, we obtain that $M_i$ is a \emph{$G$-ENR}.
That is, there exist a closed $G$-embedding of $M_i$ in a euclidean $G$-space (equipped with a smooth orthogonal representation of $G$), an open $G$-neighborhood $U$ of $M_i$, and a $G$-retraction $r: U \longra M_i$.
By a theorem of Illman \cite[7.2]{Illman_compactLie}, the smooth $G$-manifold $U$ admits a $G$-CW structure, finite-dimensional and countable.

Write $e: M_i \longra U$ for the $G$-inclusion.
By the $G$-version of Mather's trick \cite{Mather}, the mapping torus $\Torus(r \circ e)$ is $G$-homotopy equivalent to $\Torus(e \circ r)$.
Note $\Torus(r \circ e)$ is $G$-homeomorphic to $M_i \times S^1$ with trivial $G$-action on $S^1$.
By $G$-cellular approximation \cite[II:2.1]{tomDieck}, the $G$-map $e \circ r: U \longra U$ is $G$-homotopic to a cellular $G$-map $c: U \longra U$.
Then $\Torus(e \circ r)$ is $G$-homotopy equivalent to the finite-dimensional countable $G$-CW complex $\Torus(c)$.
Thus $M_i \times S^1$, hence the infinite cyclic cover $M_i \times \R \simeq M_i$, is $G$-homotopy equivalent to a finite-dimensional countable $G$-CW complex $K_i$, namely the bi-infinite mapping telescope of $c$.

Thus, for each $i$, we obtain a $G$-homotopy equivalence $f_i: M_i \longra K_i$.
Choose a $G$-homotopy inverse $\ol{f_i}: K_i \longra M_i$.
Write $c_i: M_i \longra M_{i+1}$ for the $G$-cofibrant inclusion.
By $G$-cellular approximation \cite[II:2.1]{tomDieck}, the composite $f_{i+1} \circ c_i \circ \ol{f_i}$ is $G$-homotopic to cellular $G$-map $g_i : K_i \longra K_{i+1}$.
Recursively by Lemma~\ref{lem:strict} for each $i$, we may reselect $f_{i+1}$ up to $G$-homotopy such that $f_{i+1} \circ c_i = g_i \circ f_i$ holds.
These assemble to a $G$-homotopy equivalence of the mapping telescopes
\[
f: \hocolim_{i \in \N} (M_i,c_i) \longra K := \hocolim_{i \in \N} (K_i, g_i).
\]
Thus, as the $c_i$ are cofibrations, the topological $G$-manifold $M = \colim_i (M_i, c_i)$ is $G$-homotopy equivalent to the countable $G$-CW complex $K$ (\cite[Appendix]{Milnor_Morse}).
\end{proof}


Notice that, since the dimension of the euclidean space equivariantly containing $M_i$ may be unbounded over all $i$, the CW complex $K$ may not be finite-dimensional.

\section{Examples and related results}

\begin{exm}[Bing]\label{exm:horned}
Consider the double $D := E \cup_A E$ of the closed exterior $E$ in $S^3$ of the Alexander horned sphere $A \approx S^2$.
This double has an obvious involution $r_B$ that interchanges the two pieces and leaves the horned sphere fixed pointwise.
R\,H~Bing showed that $D$ is homeomorphic to the 3-sphere \cite{Bing}.
By \ref{thm:equi_Lie}, $(S^3,r_B)$ has the $C_2$-homotopy type of a finite-dimensional countable $C_2$-CW complex.
\end{exm}

\begin{exm}[Montgomery--Zippin]\label{exm:MZ}
Adaptation of Bing's ideas produces an involution $r_{MZ}$ of $S^3$ with fixed set a wildly embedded circle \cite[\S2]{MZ}.
So $(S^3,r_{MZ})$ also has the $C_2$-homotopy type of a finite-dimensional countable $C_2$-CW complex.
\end{exm}

\begin{exm}[Lininger]\label{exm:Lininger}
For each $k \geqslant 3$, there exist uncountably many inequivalent \emph{free} $U_1$-actions on $S^{2k-1}$ with quotient not a $C^0$-manifold \cite[Remark 2]{Lininger}.
Each has the $U_1$-homotopy type of a finite-dimensional countable $U_1$-CW complex.
Indeed, using \cite{AC} for mutation\footnote{His first step is $C^0$-existence of a product tube around a principal orbit \cite[Remark V:4]{MZ_book}.} of $S^{2k-1} \longra \C P_k$ keeps isotropy groups trivial.
\end{exm}

\begin{rem}[Kwasik]\label{rem:continuity}
The case of $G$ finite in Theorem~\ref{thm:equi_Lie} was proven by S~Kwasik \cite[3.6]{Kwasik}.
However, in his first step, he implicitly assumed the continuity of the $G$-action, defined by $g \cdot f := (x \mapsto f(g^{-1} x))$, on the Banach space $B(X)$ of bounded continuous functions $X \longra \R$ equipped with the sup-norm.
This popular assumption was implicit in the infamous assertion of an equivariant version of Wojdys{\l}awski's theorem.
Non-equivariantly it states that the \emph{Kuratowski embedding,} defined by $x \longmapsto (y \mapsto d(x,y))$, of a metrizable space $X$ into $B(X)$ \cite[6]{Kuratowski}, has image closed in its convex hull, where $d$ is any bounded metric on $X$ \cite[7]{Wojdyslawski}.
Nonetheless, Kwasik's assumption is indeed true and proven later by S~Antonyan; it follows from \cite[Proposition 8]{Antonyan1} that it holds when $G$ is a discrete group.
Continuity of the $G$-action on $B(X)$ fails if $G = U_1$ \cite[\S8:I]{Antonyan1}, a flaw in \cite{Kwasik_BADPROOF}.
\end{rem}

\begin{cor}\label{cor:equi}
Let $\G$ be a virtually torsionfree\footnote{Virtually torsionfree holds necessarily for all finitely generated $\G < GL_n(\C)$ by Selberg \cite[8]{Selberg}.  Deligne gives an example of this property failing for a lattice $\G$ in a \emph{nonlinear} Lie group \cite{Deligne}. I conjecture that this corollary is true if ``virtually torsionfree'' is replaced with ``residually finite.''}, discrete group.
Any topological $\G$-manifold with proper action has the $\G$-homotopy type of a countable $\G$-CW complex.
Moreover if the action is cocompact, then the CW complex can be finite-dimensional.
\end{cor}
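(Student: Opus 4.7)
The plan is to reduce to Theorem~\ref{thm:equi_Lie} by descending through a torsion-free finite-index subgroup. Intersecting the finitely many $\G$-conjugates of a torsion-free finite-index subgroup of $\G$ yields a normal torsion-free subgroup $\G' \normal \G$ of finite index; set $F := \G/\G'$ with projection $\pi : \G \twoheadrightarrow F$. Properness forces each stabilizer $K := \Stab_\G(x)$ to be finite, and torsion-freeness of $\G'$ forces $K \cap \G' = 1$, so $\G'$ acts freely and properly on $M$. Hence $\ol{M} := \G' \backslash M$ is a topological manifold, the quotient $M \twoheadrightarrow \ol{M}$ is a regular $\G'$-covering, and the finite group $F$ acts continuously on $\ol{M}$.

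The technical heart is to show that $\ol{M}$ is a topological $F$-manifold in the sense of Definition~\ref{def:top}. I would use a slice construction for proper discrete actions: properness plus Hausdorffness produces, for each $x \in M$, a $K$-invariant open neighborhood $U \ni x$ satisfying $\gamma U \cap U \neq \emptyset$ only when $\gamma \in K$, so that the tube $\G \cdot U$ is $\G$-equivariantly homeomorphic to $\G \times_K U$. Quotienting by $\G'$ and using the isomorphism $K \xrightarrow{\sim} \pi(K)$ induced by $K \cap \G' = 1$ yields a chart $F \times_{\pi(K)} U$ in $\ol{M}$; near the point $[x]$ itself this chart restricts to $U$ equipped with the $\pi(K)$-action transported from the $K$-action. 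Consequently, for any $H \leqslant F$ with $H \subseteq \pi(K)$, the fixed set $\ol{M}^H$ near $[x]$ coincides with $U^{\wt{H}}$, where $\wt{H} \leqslant K$ is the unique lift of $H$ via $\pi|_K$; this is open in the topological manifold $M^{\wt{H}}$ hypothesized by Definition~\ref{def:top}, hence is itself a topological manifold. As every point of $\ol{M}^H$ admits such a manifold chart, $\ol{M}^H$ is a topological manifold.

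With $\ol{M}$ thus identified as a topological $F$-manifold for the finite (hence compact Lie) group $F$, Theorem~\ref{thm:equi_Lie} supplies an $F$-homotopy equivalence $f : \ol{M} \longra \ol{K}$ to a countable $F$-CW complex $\ol{K}$, finite-dimensional whenever $\ol{M}$ is compact---equivalently, whenever the $\G$-action on $M$ is cocompact, since $[\G:\G'] < \infty$. Standard covering-space theory then produces a $\G'$-cover $\wt{K} \twoheadrightarrow \ol{K}$ together with a $\G$-equivariant homotopy equivalence $M \longra \wt{K}$ lifting $f$. Since each $F$-cell $F/L \times D^n$ of $\ol{K}$ is a product with the contractible $D^n$, its preimage in $\wt{K}$ is a single $\G$-orbit cell $\G/\wt{L} \times D^n$ for a finite lift $\wt{L} \leqslant \G$ of $L$, selected by the covering data (concretely, $\wt{L}$ can be taken to be the $\G$-stabilizer $K_x$ at any point $x \in M$ lying over an $L$-fixed point of $\ol{M}$). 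Hence $\wt{K}$ inherits a $\G$-CW structure with the same number of equivariant cells as $\ol{K}$, countable in general and finite-dimensional in the cocompact case.

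The main obstacle is the middle paragraph: in view of Bing's Example~\ref{exm:Bing}, one might fear that passage to the quotient $\ol{M}$ could create wild $F$-fixed loci, since fixed sets of topological actions need not be manifolds. The conjunction of properness of the $\G$-action (supplying the tube $\G \times_K U$) with torsion-freeness of $\G'$ (identifying $K \xrightarrow{\sim} \pi(K)$) rules this out: together they make the local $F$-action on $\ol{M}$ a faithful copy of the local $K$-action on $M$, so the hypothesized manifold structure on each $M^{\wt{H}}$ transports directly to $\ol{M}^H$. This is exactly where the hypothesis ``virtually torsionfree'' enters the argument; weakening it to ``residually finite'' as in the author's footnoted conjecture would eliminate this quotient manifold and require substantially different machinery.
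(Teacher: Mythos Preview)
Your proof is correct and follows the same architecture as the paper's: pass to the quotient by a torsionfree normal finite-index subgroup $\G'$, verify the quotient is a topological $F$-manifold for the finite group $F=\G/\G'$, apply Theorem~\ref{thm:equi_Lie}, and then lift the resulting $F$-CW complex back to a $\G$-CW complex via covering-space theory. The paper is terser---it dispatches your middle paragraph in one line (``by the evenly covered property, $M$ is a topological $G$-manifold'') and performs the lift via an explicit pullback square~\eqref{eqn:pullback} with the homotopy equivalence directed from CW complex to manifold rather than the reverse---but the substance is identical.
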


\begin{proof}
Using intersection with finitely many conjugates, there is a normal, torsionfree subgroup $\G^+$ of $\G$.
Write $G := \G/\G^+$ for the finite quotient group.
Since the $\G$-action on the given topological $\G$-manifold $M^+$ is properly discontinuous and $\G^+$ is torsion-free, $M^+$ is a regular $\G^+$-covering space of the orbit space $M := M^+/\G^+$.
Thus, by the evenly covered property, $M$ is a topological $G$-manifold.
Therefore, by Theorem~\ref{thm:equi_Lie} (or just \cite[3.6]{Kwasik} amended by Remark~\ref{rem:continuity}), there is a $G$-homotopy equivalence $f: X \longra M$ from a $G$-equivariant countable CW complex $X$.

Write $X^+ := f^* M^+$ and consider the pullback diagram of $\G$-spaces and $\G$-maps:
\begin{equation}\label{eqn:pullback}
\begin{CD}
X^+ @>f^+>> M^+\\
@Vf^*qVV @VqVV\\
X @>f>> M.
\end{CD}
\end{equation}
Since $q$ is a regular $\G^+$-covering map, the induced map $f^*q$ given by inclusion-projection is also a regular $\G^+$-covering map.
Then the covering space $X^+$ is a CW complex \cite[IV:8.10]{Bredon_GT}.
This $\G$-invariant CW structure on $X^+$ is a countable $\G$-CW complex \cite[II:1.15]{tomDieck}.
The induced map $f^+$ is a $\G$-homotopy equivalence.
\end{proof}

An application of Corollary~\ref{cor:equi}: any such topological $\G$-manifold, whose fixed set of any finite subgroup is contractible, is a model for the classifying space $E_{\fin}\G$; the $\G$-CW condition is assumed for the existence of classifying maps.
This corollary thus simplifies the fundamentally complicated proofs of \cite[2.3]{CDK1} \cite[2.5]{CDK2}.

\begin{rem}[Freedman, Quinn]
Let $G$ be a topological group.
A \emph{locally linear $G$-manifold} \cite[\S IV:1]{Bredon_TG} (`locally smooth' therein) is a proper $G$-space $M$ such that, for each $x \in M$, there are a $G_x$-invariant subset $x \in S \subset M$ and a homeomorphism $S \longra \R^k$ equivariant with respect to an $O(k)$-representation of the \emph{isotropy group}
\[
G_x ~:=~ \{ g \in G \;|\; gx=x \}
\]
such that the multiplication map $G \times_{G_x} S \longra M$ is an open embedding.
Any smooth $G$-manifold is a locally linear $G$-manifold \cite[2.2.2]{Palais}.
M~Freedman constructed a nonsmoothable free involution $r_F$ on the 4-sphere (more in \cite{KL}).
Hence $(S^4,r_F)$ is a locally linear $C_2$-manifold that is not a smooth $C_2$-manifold.
F~Quinn constructed a locally linear $G$-disc $\Delta$ where $G=C_{15} \rtimes_2 C_4$ such that $\Delta$ does not contain a \emph{finite} $G$-CW complex in its $G$-homotopy type \cite[2.1.4]{Quinn_EndsII}.
So for non-smooth actions there is no $G$-analog of the Borsuk Conjecture \cite[5.3]{West}.
\end{rem}

\begin{rem}[Elfving]
Let $G$ be any Lie group, and further assume that all actions are \emph{proper} in the more specialized sense of Palais \cite[1.2.2]{Palais}.
E~Elfving concluded that any locally linear $G$-manifold has the $G$-homotopy type of a $G$-CW complex \cite{Elfving2}.
To contrast with Theorem~\ref{thm:equi_Lie}, the input type of group is broader, but the input type of manifold is restricted, and the output type of CW complex less sharp.
Philosophically, for a homotopy-type result, one does not need the homeomorphism structure of locally linear tubes, but simply equivariant neighborhood retractions.
\end{rem}

\section{Application of the Hilbert--Smith Conjecture}\label{sec:HilbertSmith}

An early attempt on Hilbert's fifth problem (1900) is this approximation \cite{vonNeumann}.

\begin{lem}[von Neumann]\label{lem:tower}
Any compact Hausdorff group is isomorphic to the filtered limit of an infinite tower of compact Lie groups and smooth epimorphisms.
\end{lem}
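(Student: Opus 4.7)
The plan is to deduce this from the Peter--Weyl theorem by realizing $G$ as the inverse limit of the Lie-group images of its finite-dimensional continuous unitary representations.

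First I would apply Peter--Weyl to fix an indexing set $A$ of continuous unitary representations $\rho_\alpha : G \longra U(n_\alpha)$ whose matrix coefficients separate points; equivalently, $\bigcap_{\alpha \in A} \ker \rho_\alpha = \{e\}$. For each finite $F \subset A$, the product representation $\rho_F := \prod_{\alpha \in F} \rho_\alpha$ lands in the compact Lie group $\prod_{\alpha \in F} U(n_\alpha)$; by compactness of $G$ the image $G_F := \rho_F(G)$ is closed in a Lie group, hence itself a compact Lie group by Cartan's closed-subgroup theorem, with kernel $K_F := \ker \rho_F \normal G$ closed and with $G/K_F \iso G_F$.

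Next I would assemble the $G_F$ into a cofiltered system indexed by the directed poset of finite subsets of $A$: when $F \subset F'$, the evident coordinate projection restricts to a surjective group homomorphism $G_{F'} \longra G_F$, and continuous homomorphisms between compact Lie groups are automatically smooth. The resulting canonical map $G \longra \lim_F G_F$ is a continuous homomorphism; it is injective because $\bigcap_F K_F = \{e\}$, and surjective by a finite-intersection argument, since a compatible tuple $(g_F K_F)_F$ determines a family of nonempty closed cosets in the compact space $G$ with the finite intersection property. A continuous bijective homomorphism between compact Hausdorff groups is a topological isomorphism, which completes the identification.

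The subtle point is to read ``tower'' in the statement as a cofiltered system rather than a chain indexed by $\N^{\op}$: when $G$ is second countable the set $A$ may be chosen countable and a cofinal $\N^{\op}$-indexed chain extracted, but for general compact Hausdorff $G$ the indexing poset of finite subsets of $A$ is only directed. I expect the main obstacle to be the appeal to Peter--Weyl itself, since the existence of a point-separating family of finite-dimensional continuous representations on an arbitrary compact Hausdorff group is precisely the nontrivial input; granting that and Cartan's closed-subgroup theorem, the rest is routine point-set topology.
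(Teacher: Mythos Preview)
Your proposal is correct and follows essentially the same route as the paper: Peter--Weyl supplies point-separating finite-dimensional unitary representations, finite products of these yield compact Lie quotients, and $G$ is identified with the filtered inverse limit via triviality of the intersected kernels (injectivity) and the finite-intersection property in compact $G$ (surjectivity). The only cosmetic differences are that the paper indexes by finite subsets of $G \setminus \{e\}$ (choosing one representation per nonidentity element) rather than by finite subsets of a representation set, and it treats the first-countable case separately before invoking Weil's general directed-set argument---exactly the distinction you flag in your final paragraph.
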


\begin{proof}[Proof (Pontrjagin--Weil)]
We outline a later proof, as it is a beautiful interaction of algebra, analysis, and topology \cite[Theorem 54]{Pontrjagin}. 
Haar (1933) showed that any locally compact Hausdorff group $G$ admits a translation-invariant measure $\mu$ on its Borel $\sigma$-algebra.
Since $G$ is compact, we take $\mu$ to be a probability measure.
Urysohn's lemma (1925) implies $G$ is completely Hausdorff.
So on the Hilbert space $\cH := L^2(G,\mu;\C)$ the $G$-action via pre-multiplication is faithful.
Peter--Weyl (1927) showed the sum of all finite-dimensional $G$-invariant $\C$-linear subspaces is dense in $\cH$.
Thus each nonidentity element of $G$ acts nontrivially on such a subspace of $\cH$.

So, since $G$ is compact, it follows that each neighborhood $B$ of the identity $e$ in $G$ contains the kernel $N$ of a finite product of (finite-dimensional) unitary representations.
That is, $B \supset N \triangleleft G$ and $G/N$ is monomorphic to a finite product of unitary groups.
So $G/N$ is a Lie group (von Neumann, 1929).
If $G$ is first-countable, then $e$ has a countable decreasing basis $\{B_i\}_{i=0}^\infty$, so we can also assume decreasing for the corresponding sequence $\{N_i\}_{i=0}^\infty$ of closed normal subgroups of $G$ with $G/N_i$ a (compact) Lie group.
So, since $\bigcap_{i=0}^\infty N_i$ is trivial, the homomorphism $G \longra \lim_i (G/N_i)$ is injective.
It is surjective, since for any $\{a_i N_i \}_{i=0}^\infty$ in the limit, the intersection of the descending chain of the compact sets $a_i N_i \subset G$ is nonempty.

Weil removed first-countability by limiting over a larger directed set \cite[\S25]{Weil}.
Consider the set $\cF$ of finite subsets of $G-\{e\}$ partially ordered by inclusion; note it is directed as $I,J \in \cF$ implies $I \cup J \in \cF$.
For each $e \neq x \in G$, our unitary representation $M_x: G \longra U_{d(x)}$ satisfies $M_x(x)\neq \id$; call the kernel $N_x \triangleleft G$.
For each $I \in \cF$, define $M_I := \prod_{x \in I} M_x$ with kernel $N_I = \bigcap_{x \in I} N_x$, so that again $G/N_I \iso M_I(G) \leqslant \prod_{x \in I} U_{d(x)}$ is a compact Lie group.
Thus $I \longmapsto G/N_I$ is a functor from $\cF^{\mathrm{op}}$ to the category of compact Lie groups and smooth epimorphisms.
Since each $e \neq x \notin N_x$, the homomorphism $q: G \longra \lim_{I \in \cF} (G/N_I)$ is injective.
Since $\{a_I N_I\}_{I \in \cF}$ has finite-intersection property, $q$ is surjective \cite[26.9]{Munkres}.
\end{proof}

I was unable to apply this tower to generalize Theorem~\ref{thm:equi_Lie} to compact Hausdorff $G$.
However, under certain circumstances, $G$ becomes Lie, so that Theorem~\ref{thm:equi_Lie} applies.
If $G$ is compact, by Lemma~\ref{lem:tower}, $G$ \emph{has no small subgroups} if and only if it is Lie.
Notice, for any prime $p$, the additive group $\Z_p$ of $p$-adic integers is compact Hausdorff with arbitrarily small subgroups.
So $G$ is Lie implies it contains no $\Z_p$.

\begin{conj}[Hilbert--Smith]\label{conj:HilbertSmith}
Let $G$ be a locally compact group.
If $G$ admits a faithful action on a connected topological manifold $M$, then $G$ must be a Lie group.
In short, $G$ is Lie if $G \leqslant \Homeo(M)$.
Hilbert's fifth problem is the case $M=G$.
\end{conj}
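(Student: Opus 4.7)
The plan is a standard two-step reduction followed by a cohomological attack on the hard residual case. First, I would reduce to showing that the group $\Z_p$ of $p$-adic integers cannot act faithfully on a connected topological $n$-manifold $M$ for any prime $p$. The reduction proceeds as follows: any locally compact group $G$ acting on $M$ contains an identity neighborhood with compact closure, and the closure $H$ of the subgroup it generates is compactly generated. If $H$ is not Lie then, by Lemma~\ref{lem:tower} applied to a maximal compact subgroup of $H$, we obtain a compact non-Lie subgroup; standard structure theory of compact groups (a diagonal argument on the kernels produced by Lemma~\ref{lem:tower}) embeds some $\Z_p$ as a closed subgroup, and hence produces a faithful $\Z_p$-action on $M$, since the restriction of a faithful action to a closed subgroup is faithful.

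Having reduced to the case $G=\Z_p$, I would attempt to derive a contradiction via Smith theory. Each finite quotient $\Z/p^k$ acts on $M$ through $\Z_p$, and Smith's theorem (cf.\ the remark following Example~\ref{exm:Bing}) controls the $\F_p$-cohomology of the fixed sets $M^{\Z/p^k}$: each is an $\F_p$-cohomology manifold of dimension at most $n$. One then analyses the orbit space $M/\Z_p$, following classical work of C-T~Yang, Raymond, and Williams, which forces the $\F_p$-cohomological dimension of $M/\Z_p$ to equal $n+2$ whenever the action is effective. The desired contradiction would come from bounding this dimension above by $n$, via a Hurewicz-type argument on the proper quotient map $M \longra M/\Z_p$, since $\Z_p$ is totally disconnected and so fibres are zero-dimensional.

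The main obstacle is precisely this last step: controlling the cohomological dimension of the orbit space from above in the absence of any regularity on the action. The whole difficulty of the conjecture is concentrated here, because $\Z_p$-actions are pathological exactly to the extent that they can raise cohomological dimension, and no known tool rules this out for purely topological actions. Every positive result in the literature exploits extra structure: smooth actions (Montgomery--Zippin), Lipschitz or quasi-conformal actions (Repov\v{s}--\v{S}\v{c}epin and others), or low dimension $n\leqslant 3$ (Pardon). I would therefore expect any general attack to require a genuinely new geometric input --- an intrinsic bounded-geometry structure on topological manifolds sufficient to upgrade any $\Z_p$-action to one with controlled distortion, or a refinement of Smith theory that bypasses orbit-space dimension entirely --- and I do not see a plausible line of attack from the techniques assembled in this paper.
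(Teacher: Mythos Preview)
There is nothing to compare against: the paper does not prove this statement. It is labelled \texttt{conj} and treated throughout Section~\ref{sec:HilbertSmith} as an open problem; the paper only records partial affirmations under regularity hypotheses (Kuranishi for $C^1$-actions, Repov\v{s}--\v{S}\v{c}epin for Lipschitz actions, George-Michael for locally Lipschitz actions) and then draws a corollary from those.

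Your proposal is not a proof but a correct diagnosis of why the conjecture is hard, and it matches the paper's own framing. The reduction to a faithful $\Z_p$-action via Lemma~\ref{lem:tower} is exactly the mechanism the paper alludes to in the paragraph preceding the conjecture (``$G$ is Lie implies it contains no $\Z_p$''). Your invocation of Yang's result $\dim_\Z(M/\Z_p)=\dim_\Z(M)+2$ is the same ingredient the paper cites in discussing Theorems~\ref{thm:RvS} and~\ref{thm:GM}. And your final paragraph---that the missing step is an upper bound on the dimension of $M/\Z_p$, available only under extra regularity---is precisely why the paper stops at Lipschitz-type hypotheses. So you have accurately located the gap; you simply have not closed it, and neither has anyone else.
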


Partial affirmations exist if one assumes some sorts of regularity of the action.

\begin{thm}[Kuranishi {\cite[4]{Kuranishi} \cite[V:2.2]{MZ_book}}]
Conjecture~\ref{conj:HilbertSmith} is true for $G$, if there exists a $C^1$-manifold structure on $M$ and the action is by $C^1$-diffeomorphisms.
\end{thm}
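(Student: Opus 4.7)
The plan is to combine Lemma~\ref{lem:tower} with the linearization technique of Bochner to preclude small subgroups. First, since ``being a Lie group'' is a local property at the identity, we may replace $G$ by the compact closure of a sufficiently small open neighborhood of $e$, reducing to the case $G$ compact Hausdorff. By Lemma~\ref{lem:tower}, such a $G$ is Lie if and only if it has no small subgroups. Classical structure theory of compact groups (van~Dantzig plus profinite Sylow theory) then implies that any compact Hausdorff group with small subgroups contains a closed copy of $\Z_p$ for some prime $p$. Thus it suffices to prove: no effective $C^1$-action of $\Z_p$ on $M$ exists.

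Suppose such an action does exist. I would first produce a fixed point of a large open subgroup. Pick any $x \in M$ and a $C^1$-chart $U \approx \R^n$ about $x$. For $k$ sufficiently large, the open subgroup $H := p^k \Z_p \iso \Z_p$ has $H \cdot x \subset U$ with arbitrarily small diameter; averaging the orbit $H \cdot x$ against Haar measure in the vector-space chart then yields a point $y \in U$ with $H \cdot y = y$. Differentiating at $y$ gives a continuous homomorphism $D_y: H \longra GL(T_yM) \iso GL_n(\R)$. Since $H$ is compact and totally disconnected while $GL_n(\R)$ is a Lie group, $D_y(H)$ is a compact totally disconnected subgroup of a Lie group, hence finite. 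So the kernel $K := \bKer D_y$ has finite index in $H$, is itself isomorphic to $\Z_p$, and acts trivially on $T_yM$.

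Finally, Bochner's averaging formula $\phi(p) := \int_K D_k|_y^{-1}\cdot k(p)\,dk$ produces a $C^1$-conjugacy near $y$ between the $K$-action and its linearization $D_y|_K = \id$; so $K$ acts trivially on an open neighborhood of $y$. The fixed set $M^K$ is then both open and closed, so connectedness of $M$ forces $M^K = M$, contradicting effectiveness since $K \neq 1$. I expect the main obstacle to lie in two places: the initial reduction to $\Z_p$, which requires structural input about compact totally disconnected groups beyond Lemma~\ref{lem:tower}; and verifying that Bochner's formula genuinely defines a $C^1$-map when the acting group $K$ is profinite rather than Lie, which requires joint continuity of $(k,p) \mapsto D_k(p)$ on $K \times U$ rather than just separate continuity. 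It is precisely the $C^1$-hypothesis that underwrites both the tangent representation and the averaging, and its absence is why Conjecture~\ref{conj:HilbertSmith} remains open for genuinely topological actions.
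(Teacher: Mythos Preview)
The paper does not supply its own proof of this theorem; it is stated with attribution to Kuranishi and Montgomery--Zippin and then immediately used. So there is no in-paper argument to compare against, and your proposal should be read as an attempted reconstruction of the classical Bochner--Montgomery route.

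That route is broadly the right one, and your last paragraph (Bochner's averaging of the action once a fixed point with trivial derivative is in hand) is correct and is exactly how the $C^1$ hypothesis gets used. The reduction to an effective $\Z_p$-action is also the standard opening move, though as you note it needs Gleason--Yamabe together with the structure theory of compact groups, not just Lemma~\ref{lem:tower}; your phrase ``compact closure of a sufficiently small open neighborhood of $e$'' is not literally a subgroup, so that sentence should be replaced by the Gleason--Yamabe statement.

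The genuine gap is the fixed-point step. Writing $y := \int_H h\cdot x\,dh$ in a linear chart does \emph{not} produce an $H$-fixed point: for $g\in H$ one has $g\cdot y \neq \int_H g\cdot(h\cdot x)\,dh$ because the $g$-action is not linear in the chart, so translation-invariance of Haar measure does not help. As written, the argument stalls here. The classical repair is to first manufacture $H$-invariance of the ambient geometry rather than of a single point: average a Riemannian metric over $H$ (this uses $C^1$ so that pulled-back metrics are $C^0$ and the integral makes sense), and then take the Riemannian center of mass of the small orbit $H\cdot x$ in a convex ball; that barycenter \emph{is} $H$-fixed because it is characterized intrinsically. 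Alternatively one can bypass fixed points entirely, as in Montgomery--Zippin, by showing directly that an effective $C^1$ action forces \emph{no small subgroups}. Either way, the averaging-a-point step must be replaced.
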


If $G$ is compact, it is unworthy to use Theorem~\ref{thm:equi_Lie} as the $C^1$-action becomes $C^\omega$ \cite[1.3]{MS}.
Any $C^1$-map (continuous first partial derivatives) is locally Lipschitz.

\begin{thm}[Repov\v{s}--\v{S}\v{c}epin \cite{RvS}]\label{thm:RvS}
Conjecture~\ref{conj:HilbertSmith} is true for $G$, if the action is by Lipschitz homeomorphisms with respect to a riemannian metric on smooth $M$.
\end{thm}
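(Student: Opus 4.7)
The plan is to reduce Conjecture~\ref{conj:HilbertSmith} to an obstruction on $p$-adic group actions via dimension theory. First, by the Gleason--Yamabe structural refinement of Lemma~\ref{lem:tower} to the locally compact (but not necessarily compact) setting, any locally compact group that is not a Lie group contains a closed subgroup topologically isomorphic to the $p$-adic integers $\Z_p$ for some prime $p$. So it suffices to derive a contradiction from the existence of a faithful continuous action of $\Z_p$ on the connected topological manifold $M$ by Lipschitz homeomorphisms with respect to the given Riemannian distance.

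Next, I would promote this to a $\Z_p$-invariant metric. The Lipschitz constants of the action elements form an upper semicontinuous finite function on the compact group $\Z_p$, hence are uniformly bounded; averaging the Riemannian distance against normalized Haar measure on $\Z_p$ then yields a bi-Lipschitz equivalent $\Z_p$-invariant metric $d$ on $M$, with respect to which each group element acts by an isometry and $\dim_H (M,d) = n := \dim M$. The quotient projection $\pi: M \longra M/\Z_p$, equipped with the induced quotient pseudometric, is then $1$-Lipschitz. Since Lipschitz maps do not increase Hausdorff dimension,
\[
\dim_H (M/\Z_p) ~\leqslant~ n.
\]

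The contradiction would come from $p$-adic Smith theory: Yang's theorem asserts that any effective action of $\Z_p$ on a connected topological $n$-manifold produces an orbit space of integer cohomological dimension exactly $n+2$. Combining this with the chain of inequalities
\[
n+2 ~=~ \dim_\Z (M/\Z_p) ~\leqslant~ \dim_{\mathrm{cov}} (M/\Z_p) ~\leqslant~ \dim_H (M/\Z_p) ~\leqslant~ n
\]
--- where the first step uses Alexandroff's characterization of covering dimension via \v{C}ech cohomology on finite-dimensional compact metric spaces, and the second is the Szpilrajn--Marczewski theorem --- would produce the impossible $n+2 \leqslant n$, forcing the original $G$ to be Lie.

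The main obstacle is the clean application of Yang's theorem to a potentially non-compact $M$: one must verify that effectiveness of $\Z_p \hookrightarrow G \leqslant \Homeo(M)$ transfers to the restricted action, and that the cohomological dimension bound survives localization. I would address this by passing to a small $\Z_p$-invariant geodesic ball around a point of nontrivial isotropy (which exists by faithfulness and Newman's theorem on periodic maps), using the invariant metric $d$ from Step~2 to ensure the ball is genuinely $\Z_p$-stable, and then applying Yang's theorem on that compact local model in place of $M$. The delicate verification that the Riemannian Hausdorff dimension of this localized quotient remains bounded by $n$ under the quotient pseudometric is the technical crux.
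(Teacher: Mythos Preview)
Your overall strategy---reduce to a faithful $\Z_p$-action, pass to an invariant bi-Lipschitz equivalent metric, and contradict Yang's formula $\dim_\Z(M/\Z_p)=n+2$ via the chain of dimension inequalities---is exactly the Repov\v{s}--\v{S}\v{c}epin argument the paper sketches.

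There is, however, a genuine gap in your second step. The Lipschitz-constant function
\[
g \longmapsto L(g) := \sup_{x\neq y} \frac{d(gx,gy)}{d(x,y)}
\]
is a pointwise supremum of continuous functions of $g$, hence \emph{lower} semicontinuous, not upper semicontinuous as you claim; compactness of $\Z_p$ therefore gives no uniform upper bound, and your Haar-averaging is unjustified as written. The paper flags the correct mechanism when it refers to ``the Baire-category part of the argument'': the sets $E_N := \{g \in \Z_p : L(g)\leqslant N\}$ are closed and cover $\Z_p$, so by Baire some $E_N$ contains a coset $a+p^k\Z_p$; translating by $-a$ yields a uniform Lipschitz bound on the open subgroup $p^k\Z_p\cong\Z_p$, and it is over this small subgroup that one averages the metric.

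Your final paragraph also conflates this theorem with the next one. The localization to a small invariant chart is precisely the content of George-Michael's refinement (Theorem~\ref{thm:GM}), not of Repov\v{s}--\v{S}\v{c}epin. In the latter the riemannian hypothesis is global and, as the paper notes, is used only to pin down $\dim_H(M,d)=\dim_{\mathrm{cov}}(M)=n$; no passage to a local model is required, and your worries about non-compact $M$ and Newman's theorem are unnecessary here.
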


Their argument shows the impossibility of $G$ containing $\Z_p$ for any prime $p$, by otherwise establishing inequalities with Hausdorff dimension for various metrics.
The riemannian hypothesis is only used to obtain equality with covering dimension.

Later it was noticed that the Baire-category part of the argument could be done locally in a euclidean chart, averaging a metric on it over a small subgroup of $\Z_p$.
In this situation, the new lemma is: $M$ and $M/\Z_p$ have equal covering dimensions.

Both short proofs rely on C~T~Yang's 1960 result: $\dim_\Z(M/\Z_p)=\dim_\Z(M)+2$.

\begin{thm}[George-Michael \cite{GeorgeMichael}]\label{thm:GM}
Conjecture~\ref{conj:HilbertSmith} is true for $G$, if the action is by locally Lipschitz homeomorphisms on the connected topological manifold $M$.
\end{thm}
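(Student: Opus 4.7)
The plan is to rule out an inclusion $\Z_p \hookrightarrow G$ by adapting the Hausdorff-dimension argument of Repov\v{s}--\v{S}\v{c}epin (Theorem~\ref{thm:RvS}), with the ambient riemannian metric replaced by one averaged over a small subgroup inside a single coordinate chart. First, by the Gleason--Yamabe structure theorem extending Lemma~\ref{lem:tower} to the locally compact case, $G$ is a Lie group if and only if $G$ has no small subgroups, and a non-Lie locally compact group necessarily contains a copy of the $p$-adic integers $\Z_p$ for some prime $p$. Hence the task reduces to showing that $\Z_p$ admits no faithful action by locally Lipschitz homeomorphisms on the connected topological manifold $M$.

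Suppose such an action exists. Since every nontrivial closed subgroup of $\Z_p$ has the form $p^k \Z_p \iso \Z_p$, each such subgroup also acts faithfully on $M$. Using faithfulness together with continuity, locate a point $y \in M$ and a chart $\phi: U \longra \R^n$ with $y \in U$ whose pointwise stabilizer in $\Z_p$ is trivial; then for $k$ sufficiently large, $\Gamma := p^k \Z_p$ preserves a smaller open $V \subset U$ containing $y$ on which it acts effectively. Pull back the Euclidean metric $d$ along $\phi$ and form the averaged metric
\[
\ol{d}(a,b) ~:=~ \int_\Gamma d(ga,gb) \, d\mu(g)
\]
using Haar probability on $\Gamma$. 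The local Lipschitz hypothesis together with compactness of $\Gamma$ yields a uniform bi-Lipschitz bound on a smaller $\Gamma$-invariant neighborhood $W \subset V$; hence $\ol{d}$ is a $\Gamma$-invariant metric on $W$ inducing the manifold topology, relative to which $\Gamma$ acts by isometries.

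Now the quotient map $q: W \longra W/\Gamma$ is $1$-Lipschitz, so the Hausdorff dimension satisfies $\dim_H(W/\Gamma) \leqslant \dim_H(W) = n$. Since covering dimension is bounded above by Hausdorff dimension on separable metric spaces, $\dim(W/\Gamma) \leqslant n$, and the standard inequality $\dim_\Z \leqslant \dim$ then gives $\dim_\Z(W/\Gamma) \leqslant n$. But C~T~Yang's theorem applied to the effective $\Z_p$-action of $\Gamma$ on the $n$-manifold $W$ produces $\dim_\Z(W/\Gamma) = n + 2$, a contradiction that completes the reduction.

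The main obstacle is the assertion that some chart $V$ (or $\Gamma$-invariant open subset thereof) can be arranged on which $\Gamma \iso \Z_p$ acts effectively, since a priori the germ stabilizer of a point could be any nontrivial closed subgroup of $\Z_p$. The argument in \cite{GeorgeMichael} circumvents this by combining connectedness of $M$ with a Baire-category trick to extract a point with trivial germ stabilizer, after which every $p^k \Z_p$ still acts effectively on a small enough neighborhood. A secondary technicality is showing that averaging the Euclidean metric over $\Gamma$ produces a topologically compatible metric on $W$; this is exactly where the \emph{locally} Lipschitz hypothesis is indispensable, supplying the uniform Lipschitz control over the compact group $\Gamma$.
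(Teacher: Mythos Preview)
Your outline matches the paper's own sketch of George-Michael's argument (the paper does not give a formal proof, only the two sentences of commentary preceding the theorem): reduce to $\Z_p$, pass to a small subgroup acting inside a single euclidean chart, average the chart metric over that subgroup, bound the covering dimension of the orbit space via Hausdorff dimension, and contradict Yang's formula $\dim_\Z(M/\Z_p)=\dim_\Z(M)+2$. At that structural level your proposal is correct and is the same route.

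One misplacement deserves correction. You assert that ``the local Lipschitz hypothesis together with compactness of $\Gamma$ yields a uniform bi-Lipschitz bound'' on $W$, and then separately attribute the Baire-category step to the effectiveness problem. But compactness of $\Gamma$ by itself does \emph{not} give a uniform Lipschitz constant: each $g$ is locally Lipschitz, yet the pointwise constants $L_g$ need not depend semicontinuously on $g$, so $\sup_{g\in\Gamma} L_g$ can be infinite a priori. In Repov\v{s}--\v{S}\v{c}epin this is exactly where Baire category enters --- one writes $\Z_p=\bigcup_n\{g:L_g\leqslant n\}$ as a countable union of closed sets, finds one with nonempty interior, and thereby obtains a coset (hence a subgroup $p^k\Z_p$) with uniformly bounded Lipschitz constant. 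George-Michael's contribution, as the paper indicates, is to run this same Baire argument \emph{locally in a chart} rather than against a global riemannian metric. It is this uniform bound that makes the averaged metric $\ol d$ bi-Lipschitz equivalent to the euclidean one and hence forces $\dim_H(W,\ol d)=n$. So the Baire step belongs to what you called the ``secondary technicality,'' not to the germ-stabilizer issue; swap those two attributions and your account aligns with the paper's.
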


Notice any self-homeomorphism is Lipschitz if the metric space $M$ is compact.
The following consequence is pleasant because of no smoothness on $G$ or the action.

\begin{cor}
Let $G$ be a compact group.
Let $M$ be a topological $G$-manifold.
Suppose the action is faithful and that $M$ is compact and connected.
Then $M$ has the $G$-homotopy type of a $G$-equivariant finite-dimensional countable CW complex.
\end{cor}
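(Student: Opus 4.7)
The plan is to reduce to the already-established Theorem~\ref{thm:equi_Lie} by first promoting the compact group $G$ to a compact Lie group via the Hilbert--Smith machinery (Theorem~\ref{thm:GM}). Since the hypothesis of that theorem requires the action to be by locally Lipschitz homeomorphisms with respect to some metric on the connected topological manifold $M$, the first task is to equip $M$ with an appropriate $G$-invariant metric.

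First, I would produce a $G$-invariant metric on $M$ compatible with its given topology. Since $M$ is separable metrizable (by the paper's definition of topological manifold) and is furthermore compact, I would start with any compatible bounded metric $d$ and form $d'(x,y) := \sup_{g \in G} d(gx,gy)$. Joint continuity of the action combined with compactness of $G$ guarantees that $d'$ is a metric which induces the original topology on $M$. By construction each element of $G$ then acts by a $d'$-isometry, hence by a locally Lipschitz self-homeomorphism. Since the action is faithful and $M$ is connected, Theorem~\ref{thm:GM} yields that $G$ is a Lie group. Compactness of $G$ was given, so Theorem~\ref{thm:equi_Lie} applies directly: the topological $G$-manifold $M$ has the $G$-homotopy type of a countable $G$-CW complex, and its ``furthermore'' clause (applicable since $M$ is compact) upgrades this to a finite-dimensional countable one.

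The main obstacle I anticipate is the $G$-invariant metric step---verifying that the $\sup$-metric $d'$ is indeed topologically equivalent to $d$ and that the $\sup$ is actually attained. This is a standard compactness argument but cannot be skipped, since the locally Lipschitz hypothesis of Theorem~\ref{thm:GM} depends crucially on the choice of metric (with an arbitrary compatible metric a self-homeomorphism need only be uniformly continuous rather than locally Lipschitz). Once $G$-invariance of $d'$ is secured, the remainder of the argument is a clean concatenation of results already stated in the paper, with the Hilbert--Smith reduction serving as the only essential use of Section~\ref{sec:HilbertSmith}.
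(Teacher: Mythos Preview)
Your proposal is correct and follows the same two-step route the paper intends: use the Hilbert--Smith result (Theorem~\ref{thm:GM}) to force the compact group $G$ to be Lie, then invoke Theorem~\ref{thm:equi_Lie} (with its ``furthermore'' clause, since $M$ is compact). The paper gives no explicit proof, only the preceding remark that ``any self-homeomorphism is Lipschitz if the metric space $M$ is compact.'' That assertion is false for an arbitrary compatible metric (consider $x\mapsto\sqrt{x}$ on $[0,1]$), so your construction of a $G$-invariant metric via $d'(x,y)=\sup_{g\in G}d(gx,gy)$---making $G$ act by isometries and hence by locally Lipschitz maps---is exactly the justification needed to feed into Theorem~\ref{thm:GM}. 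In this sense your argument is the paper's intended one, carried out with the gap filled.
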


To such $(M,G)$, including Examples~\ref{exm:horned}--\ref{exm:Lininger} or others with $C^0$ dynamics, one can now apply equivariant homology, with variable coefficients in abelian groups (Bredon--Matumoto \cite{Matumoto_ECT}) or more generally in spectra (Davis--L\"uck \cite{DL1}).
Therefore, inductively calculable invariants of these wild objects are now available.

\subsection*{Acknowledgements}

Frank Connolly (U Notre Dame) provided early motivation (\ref{cor:equi}).
Craig Guilbault (U Wisconsin--Milwaukee) reminded me topological manifolds have handles (\ref{lem:top_exhaust}).
The following brief but fruitful discussions happened in June 2017, at Wolfgang L\"uck's 60th birthday conference (U M\"unster).
Wolfgang Steimle (U Augsburg) assured me that strictification of homotopy commutative diagrams is easy for squares (\ref{lem:strict}).
Elmar Vogt (Freie U Berlin) excited me further into classical topology (ANRs and Lemma~\ref{lem:tower}).
Finally, individually Jim Davis (Indiana U), Nigel Higson (Penn State U), Wolfgang L\"uck (U Bonn), and Shmuel Weinberger (U Chicago) encouraged me toward the Hilbert--Smith Conjecture (\ref{conj:HilbertSmith}).

The referee corrected some exposition and recommended Examples~\ref{exm:Bing} \& \ref{exm:Lininger}.
Chris Connell (Indiana U) was helpful in discussing Hausdorff dimension (\ref{thm:RvS}, \ref{thm:GM}).

\bibliographystyle{alpha}
\bibliography{CountableApproximation_compactG}

\end{document}